\DeclareSymbolFont{cyrletters}{OT2}{wncyr}{m}{n}
\DeclareMathSymbol{\Sha}{\mathalpha}{cyrletters}{"58}
\numberwithin{equation}{section}
\title[Height of rational points on congruent number elliptic curves]{Height of rational points on congruent number elliptic curves}
\author{Pierre Le Boudec}
\subjclass{$11$D$45$, $11$G$05$, $11$G$50$, $14$G$05$}
\keywords{Elliptic curves, congruent numbers, rational points, canonical height}
\address{Departement Mathematik und Informatik \\ Fachbereich Mathematik \\ Spiegelgasse $1$ \\ $4051$ Basel \\ Switzerland}
\email{pierre.leboudec@unibas.ch}
\begin{document}

\makeatletter
\def\imod#1{\allowbreak\mkern10mu({\operator@font mod}\,\,#1)}
\makeatother

\newtheorem{lemma}{Lemma}
\newtheorem{theorem}{Theorem}
\newtheorem{corollary}{Corollary}
\newtheorem{proposition}{Proposition}
\newtheorem{conjecture}{Conjecture}
\newtheorem{conj}{Conjecture}
\renewcommand{\theconj}{\Alph{conj}}
\newtheorem{theo}{Theorem}
\renewcommand{\thetheo}{\Alph{theo}}

\setcounter{theorem}{-1}
\setcounter{conjecture}{-1}

\newcommand{\vol}{\operatorname{vol}}
\newcommand{\D}{\mathrm{d}}
\newcommand{\rank}{\operatorname{rank}}
\newcommand{\Pic}{\operatorname{Pic}}
\newcommand{\Gal}{\operatorname{Gal}}
\newcommand{\meas}{\operatorname{meas}}
\newcommand{\Spec}{\operatorname{Spec}}
\newcommand{\eff}{\operatorname{eff}}
\newcommand{\rad}{\operatorname{rad}}
\newcommand{\sq}{\operatorname{sq}}
\newcommand{\tors}{\operatorname{tors}}
\newcommand{\Cl}{\operatorname{Cl}}
\newcommand{\Reg}{\operatorname{Reg}}
\newcommand{\Sel}{\operatorname{Sel}}
\newcommand{\corank}{\operatorname{corank}}
\newcommand{\an}{\operatorname{an}}

\begin{abstract}
We prove that a positive proportion of squarefree integers are congruent numbers such that the canonical height of the lowest non-torsion rational point on the corresponding elliptic curve satisfies a strong lower bound.
\end{abstract}

\maketitle

\tableofcontents

\section{Introduction}

Let $(A,B) \in \mathbb{Z}^2$ be fixed and such that $4 A^3 + 27 B^2 \neq 0$. For every squarefree integer $d \geq 1$, we let $E_d$ be the elliptic curve defined over $\mathbb{Q}$ by the equation
\begin{equation*}
d y^2 = x^3 + A x + B.
\end{equation*}
We recall that the Mordell-Weil Theorem asserts that $E_d(\mathbb{Q})$ is a finitely generated abelian group. We respectively let
$\rank E_d(\mathbb{Q})$ and $E_d(\mathbb{Q})_{\tors}$ denote its rank and its torsion subgroup. We also let $\hat{h}_{E_d}$ be the canonical height on $E_d$. The author \cite{MR3455753} has recently investigated the distribution, as $d$ varies, of the quantity
$\eta_d(A,B)$ defined by
\begin{equation*}
\log \eta_d(A,B) = \min \{ \hat{h}_{E_d}(P), P \in E_d(\mathbb{Q}) \smallsetminus E_d(\mathbb{Q})_{\tors} \},
\end{equation*}
if $\rank E_d(\mathbb{Q}) \geq 1$ and $\eta_d(A,B) = \infty$ if $\rank E_d(\mathbb{Q}) = 0$.

The goal of this note is to push this investigation further in the case of the congruent number elliptic curves. We recall that a congruent number is a positive integer which is the area of a right triangle with rational side lengths. A classical result states that a squarefree integer
$d \geq 1$ is a congruent number if and only if the elliptic curve defined over $\mathbb{Q}$ by the equation $d y^2 = x^3-x$ has positive rank. It is worth noting that the Parity Conjecture (which asserts that the analytic and algebraic ranks of an elliptic curve should have same parity) implies that any squarefree integer congruent to $5$, $6$ or $7$ modulo $8$ is a congruent number.

We start by summarizing the work of the author \cite{MR3455753}. In order to do so, we recall some classical vocabulary in analytic number theory. Given a set $S \subset \mathbb{Z}_{\geq 1}$ and a property $\mathfrak{P}$ which takes as argument a positive integer, we say that the property $\mathfrak{P}$ holds for almost every element of $S$ if
\begin{equation*}
\lim_{X \to \infty} \frac{\# \{ n \in S, n \leq X, \mathfrak{P}(n) \} }{\# \{ n \in S, n \leq X \}} = 1.
\end{equation*}
Also, we say that a positive proportion of elements of $S$ satisfy the property $\mathfrak{P}$ if
\begin{equation*}
\liminf_{X \to \infty} \frac{\# \{ n \in S, n \leq X, \mathfrak{P}(n) \} }{\# \{ n \in S, n \leq X \}} > 0.
\end{equation*}

Goldfeld's Conjecture \cite{MR564926} implies in particular that a positive proportion of squarefree integers $d \geq 1$ should satisfy
$\rank E_d(\mathbb{Q}) \geq 1$. As a result, the conjecture of the author \cite[Conjecture $1$]{MR3455753} is expected to be equivalent to the following.

\begin{conjecture}
Let $(A,B) \in \mathbb{Z}^2$ be such that $4 A^3 + 27 B^2 \neq 0$, and let $\varepsilon > 0$ be fixed. For almost every squarefree integer $d \geq 1$ such that $\rank E_d(\mathbb{Q}) \geq 1$, we have the lower bound
\begin{equation*}
\eta_d(A,B) > e^{d^{1/2 - \varepsilon}}.
\end{equation*}
\end{conjecture}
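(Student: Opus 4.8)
Since the statement is a conjecture, what follows is a conditional program rather than a proof: it reduces the conjecture to a short list of standard but deep inputs and isolates the essential obstruction. First, one reduces to the rank-one case. Goldfeld's Conjecture predicts that the squarefree $d \geq 1$ with $\rank E_d(\mathbb{Q}) \geq 2$ have density zero, so one may restrict to the $d$ with $\rank E_d(\mathbb{Q}) = 1$; for such $d$ the minimum in the definition of $\log \eta_d(A,B)$ is attained at a generator $P_0$ of $E_d(\mathbb{Q}) / E_d(\mathbb{Q})_{\tors}$, whence $\log \eta_d(A,B) = \Reg E_d(\mathbb{Q}) = \hat{h}_{E_d}(P_0)$. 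Moreover, in combination with the theorems of Gross--Zagier and Kolyvagin, Goldfeld's Conjecture forces the algebraic and analytic ranks of $E_d$ to agree and to equal $1$ for almost every such $d$, and we shall grant the full Birch and Swinnerton-Dyer formula for these curves (its order-of-vanishing part is then known, and its leading-coefficient part is known away from finitely many small primes in many cases).

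The curve $E_d$ is $\mathbb{Q}$-isomorphic to $y^2 = x^3 + A d^2 x + B d^3$, so a change of variables gives $\Omega_{E_d} \asymp_{A,B} d^{-1/2}$ for the real period and $N_{E_d} \asymp_{A,B} d^2$ for the conductor. The Birch and Swinnerton-Dyer formula for the rank-one curve $E_d$ then reads
\begin{align*}
\log \eta_d(A,B) = \Reg E_d(\mathbb{Q}) &= \frac{L'(E_d,1) \, \lvert E_d(\mathbb{Q})_{\tors} \rvert^2}{\Omega_{E_d} \prod_p c_p(E_d) \, \lvert \Sha(E_d) \rvert} \\
&\asymp_{A,B} \frac{L'(E_d,1) \, d^{1/2}}{\prod_p c_p(E_d) \, \lvert \Sha(E_d) \rvert} ,
\end{align*}
using $\lvert E_d(\mathbb{Q})_{\tors} \rvert = O_{A,B}(1)$. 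Thus $\log \eta_d(A,B) > d^{1/2-\varepsilon}$ is equivalent to $L'(E_d,1) \gg_{A,B,\varepsilon} d^{-\varepsilon} \prod_p c_p(E_d) \, \lvert \Sha(E_d) \rvert$, and it suffices to establish, for almost every squarefree $d$ with $\rank E_d(\mathbb{Q}) = 1$, the three estimates
\begin{equation*}
\prod_p c_p(E_d) = d^{o(1)}, \qquad \lvert \Sha(E_d) \rvert = d^{o(1)}, \qquad L'(E_d,1) \gg_{A,B,\varepsilon} d^{-\varepsilon} .
\end{equation*}

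The first estimate holds for every $d$: the bad primes of $E_d$ divide $2 d (4A^3+27B^2)$, one has $c_p(E_d) \leq 4$ at each prime $p \mid d$ of additive reduction and $c_p(E_d) = O_{A,B}(1)$ at the remaining boundedly many bad primes, so $\prod_p c_p(E_d) \ll_{A,B} 4^{\omega(d)} = d^{o(1)}$, where $\omega(d)$ is the number of prime divisors of $d$. The second estimate is predicted by the Delaunay--random-matrix heuristics for $\Sha$ in quadratic-twist families; here the Birch and Swinnerton-Dyer formula, combined with the Lindelöf bound $L'(E_d,1) = d^{o(1)}$ under the Riemann Hypothesis for $L(E_d,s)$ and with the lower bound $\Reg E_d(\mathbb{Q}) \gg_{A,B} 1$ (Lang's conjecture, known under the $abc$ conjecture by Hindry and Silverman), yields only the far weaker $\lvert \Sha(E_d) \rvert \ll_{A,B} d^{1/2+o(1)}$, and reaching $d^{o(1)}$ for almost every $d$ would seem to require, for instance, a power saving -- uniform over the twist family -- for the order of $\Sel_{\ell^{\infty}}(E_d)$ at every prime $\ell$.

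The crux is the third estimate: one must show that, for all but $o(X)$ of the $d \leq X$, the central derivative $L'(E_d,1)$ is not abnormally small. The Keating--Snaith and CFKRS philosophy, applied to the rank-one (``derivative'') family $\{ L(E_d,\cdot) \}$, predicts that $\log L'(E_d,1)$ is approximately Gaussian with mean and variance of order $\log\log d$, so that $L'(E_d,1) > d^{-\varepsilon}$ for almost every $d$; it would suffice to prove the lower-tail bound $\# \{ d \leq X : 0 < L'(E_d,1) < d^{-\varepsilon} \} = o(X)$, for example through a negative-moment estimate $\sum_{d \leq X}^{\flat} L'(E_d,1)^{-\delta} \ll_{\delta} X^{1+o(1)}$ (the sum taken over squarefree $d$ with $\rank E_d(\mathbb{Q}) = 1$), from which the conclusion is immediate. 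By contrast, the first and second mollified moments of $L'(E_d,1)$ over this family are within reach and already yield, via Cauchy--Schwarz, a positive proportion of squarefree $d$ with $\rank E_d(\mathbb{Q}) = 1$ and $L'(E_d,1)$ bounded below -- the mechanism behind the unconditional positive-proportion statement -- but controlling the lower tail of the distribution of $L'(E_d,1)$, hence the passage from a positive proportion to almost every $d$, lies well beyond the current state of the art and is the main obstacle.
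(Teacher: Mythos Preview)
The paper does not prove this statement: it is explicitly labeled a conjecture, and the very next sentence reads ``This conjecture is unfortunately out of reach.'' There is therefore no proof in the paper to compare your proposal against. You correctly recognize this at the outset, and what you provide is not a proof but a heuristic reduction of the conjecture to three inputs (Tamagawa products, $\lvert\Sha(E_d)\rvert$, and a lower-tail bound for $L'(E_d,1)$), each of which you accurately flag as either elementary, conjectural, or the principal obstruction. That is an appropriate response to a statement the paper itself does not attempt to establish.

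For context, the paper's actual contribution lies elsewhere: rather than attacking the conjecture via BSD and $L$-value heuristics, it proves unconditional but far weaker statements (Theorem~0 and Theorem~1) by a completely different, elementary route---parametrizing rational points through a $2$-descent, restricting to squarefree integers with a large prime factor, and invoking Smith's density result on congruent numbers. Your conditional program and the paper's unconditional method have essentially no overlap, which is to be expected given that the conjecture is stated only as motivation.
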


This conjecture is unfortunately out of reach. Nevertheless, in \cite{MR3455753} the author made a first step (particularly significant in the case of the congruent number elliptic curves) in this direction. More precisely, combining the proof of \cite[Theorem $2$]{MR3455753} and the work of Heegner \cite{MR0053135}, we see that \cite[Theorem~$2$]{MR3455753} can be restated as follows.

\begin{theorem}
Let $\varepsilon > 0$ be fixed. For almost every squarefree congruent number $d \geq 1$, we have the lower bound
\begin{equation*}
\eta_d(-1,0) > d^{5/8 - \varepsilon}.
\end{equation*}
\end{theorem}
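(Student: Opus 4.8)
The plan is to isolate, for $(A,B)=(-1,0)$, the quantitative estimate that underlies \cite[Theorem~2]{MR3455753}, namely
\begin{equation*}
N(X):=\#\bigl\{d\le X \text{ squarefree}:\ \eta_d(-1,0)\le d^{5/8-\varepsilon}\bigr\}\ll_\varepsilon X^{1-\delta}
\end{equation*}
for some $\delta=\delta(\varepsilon)>0$ (in fact I expect $N(X)\ll X^{5/8+o(1)}$), and then to divide by a lower bound for the number of squarefree congruent integers up to $X$. For the latter, Heegner's construction \cite{MR0053135} exhibits an explicit family of squarefree congruent numbers --- for instance the primes in a suitable residue class modulo $8$ --- with $\gg X(\log X)^{-1}$ elements up to $X$; hence the denominator in the definition of ``almost every squarefree congruent number'' is $\gg X(\log X)^{-1}$, and the estimate for $N(X)$ makes the corresponding ratio tend to $0$.

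To bound $N(X)$ I first pass from a point of small canonical height to one with small coordinates. If $d$ is counted by $N(X)$ there is a non-torsion $P\in E_d(\mathbb Q)$ with $\hat h_{E_d}(P)\le(5/8-\varepsilon)\log d$; writing $P=(a/c^2,\,b/c^3)$ on the model $Y^2=X^3-d^2X$ of $E_d$ --- so $\gcd(a,c)=1$ and $b^2=a(a-dc^2)(a+dc^2)$ --- I compare $\hat h_{E_d}$ with the naive height of $X(P)$. Here one must use the special features of this family, namely that $j=1728$ is constant and that the reduction at odd primes $p\mid d$ is of Kodaira type $\mathrm{I}_0^{*}$, in order to keep the discrepancy small; this is the place where the proof of \cite[Theorem~2]{MR3455753} enters. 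The conclusion is that, apart from $O(X^{1-\delta})$ exceptional $d$, one has $\max(|a|,c^2)\le d^{5/4-\varepsilon/2}$; the exponent $5/8$ of the theorem is half of this $5/4$.

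The second ingredient is a $2$-descent. Set $\delta_0=\operatorname{sqf}(a)$, $\delta_1=\operatorname{sqf}(a-dc^2)$, $\delta_2=\operatorname{sqf}(a+dc^2)$, so that $a=\delta_0 s^2$, $a-dc^2=\delta_1 t^2$ and $a+dc^2=\delta_2 r^2$ with $s,t,r\in\mathbb Z$. Because $d$ is squarefree and $b^2=a(a-dc^2)(a+dc^2)$, the descent forces $\delta_0,\delta_1,\delta_2$ to be squarefree divisors of $2d$ with $\delta_0\delta_1\delta_2$ a square; eliminating $a$ and $dc^2$ produces the conic
\begin{equation*}
\delta_1 t^2+\delta_2 r^2=2\delta_0 s^2
\end{equation*}
on which $(s,t,r)$ is a nontrivial point, together with the identity $dc^2=\delta_0 s^2-\delta_1 t^2$. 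The coordinate bound yields $|s|\le\sqrt{|a|}\ll d^{5/8}\le X^{5/8}$, and --- using $c^2\ll d^{5/4}$ and $|\delta_i|\le 2d$ --- also $|t|,|r|\ll d^{O(1)}$. Now $(\delta_0,\delta_1,s,t)$ determines the whole configuration ($c^2=(\delta_0 s^2-\delta_1 t^2)/d$ and $\delta_2 r^2=2\delta_0 s^2-\delta_1 t^2$ are forced), and in particular determines $d=\operatorname{sqf}(\delta_0 s^2-\delta_1 t^2)$; so $N(X)$ is at most the number of admissible quadruples. Summing over the $\ll X^{5/8}$ values of $s$ and then over the squarefree $\delta_0$, one is left with the requirement that $\delta_1$ and $d=\operatorname{sqf}(a-\delta_1 t^2)$ be built from the same primes as $2d$, a rigid multiplicative condition which, together with the bound on the number of representations of an integer by a binary quadratic form, should leave only $X^{o(1)}$ choices of $(\delta_1,t)$; this would give $N(X)\ll X^{5/8+o(1)}$.

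The main obstacle is precisely this counting step: one must control how the arithmetic constraint --- that the squarefree numbers $\delta_0,\delta_1,\delta_2$ and $d$ share the same prime support --- interacts with the size constraint $|s|\ll d^{5/8}$, equivalently show that the value $\delta_0 s^2-\delta_1 t^2$ of a binary quadratic form in small variables is only very rarely ``essentially squarefree of the shape $dc^2$'' for many $d$, all while losing no more than $X^{o(1)}$. A secondary technical point, also present in \cite{MR3455753}, is to establish the height comparison of the preceding paragraph with a sufficiently good constant and to handle the $O(X^{1-\delta})$ values of $d$ for which the resulting coordinate bound might fail.
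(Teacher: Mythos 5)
The paper does not actually prove this theorem: it is presented as a restatement of \cite[Theorem~2]{MR3455753} combined with Heegner's result \cite{MR0053135} that primes $p\equiv 5,7\pmod 8$ are congruent (so that $\#\{d\le X\text{ squarefree congruent}\}\gg X/\log X$). So the only thing one can really check is whether your reconstruction is consistent with that route, and at the top level it is: you bound the exceptional set $N(X)$ and divide by Heegner's $\gg X/\log X$, which is exactly what the restatement amounts to. Your use of a canonical-vs.-Weil height comparison tailored to the twist family followed by a complete $2$-descent also matches the references to \cite[Lemmas~2 and~3]{MR3455753} that this paper makes explicitly in its own Lemma \ref{Lemma E}.

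However, the core of the argument, the count of admissible descent data, is not a proof but a hope, and you say so yourself (``the main obstacle is precisely this counting step''). Concretely: from $|a|\ll d^{5/4}$ and $c^2\ll d^{5/4}$ you get $|s|\ll X^{5/8}$ but $|t|\ll d^{9/8}$, so the naive count over $(\delta_0,\delta_1,s,t)$ is on the order of $X^{5/8+9/8}=X^{7/4}$, far above $X$; the reduction to $X^{o(1)}$ choices of $(\delta_1,t)$ per $(\delta_0,s)$ via ``rigid multiplicative conditions'' and representation numbers of binary quadratic forms is not justified and is not obviously attainable (the forms $\delta_1 t^2+\delta_2 r^2$ themselves vary with the divisors $\delta_i$, so one cannot simply invoke a $\ll N^{o(1)}$ representation bound). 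A secondary issue is the height comparison: you assert $\max(|a|,c^2)\le d^{5/4-\varepsilon/2}$ ``apart from $O(X^{1-\delta})$ exceptional $d$,'' but a discrepancy estimate of the form $|\hat h(P)-\tfrac12 h(x(P))|\le c\log d+O(1)$ with any fixed $c>0$ already eats the gain from $\varepsilon$, so the needed inequality requires a genuinely refined local-height analysis (the $I_0^*$ reduction observation is the right idea, but the bookkeeping is nontrivial and is precisely what \cite[Lemma~3]{MR3455753} supplies). Neither the paper nor your proposal supplies these two ingredients, and they are exactly where the content of \cite[Theorem~2]{MR3455753} lives; as written, the proposal is a plausible outline with two acknowledged and unresolved gaps rather than a proof.
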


We note that \cite[Theorem $1$]{MR3455753} may also be restated in an analogous way using the result of Perelli and Pomyka\l a
\cite[Theorem~$1$]{MR1450922} and the work of Gross and Zagier \cite{MR833192}.

The goal of this note is to establish the following result.

\begin{theorem}
\label{Theorem}
A positive proportion of squarefree integers $d \geq 1$ are congruent numbers which satisfy the lower bound
\begin{equation*}
\eta_d(-1,0) > d^{0.845}.
\end{equation*}
\end{theorem}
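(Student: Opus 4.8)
The plan is to deduce the two assertions of the statement from a single argument, in which the lower bound for $\eta_d(-1,0)$ only has to be established for almost every $d$ in a positive‑density family of congruent numbers produced at the outset. First recall that as soon as $\rank E_d(\mathbb{Q})=1$ one has $\log\eta_d(-1,0)=\Reg(E_d)$: the height pairing kills torsion, so the minimum of $\hat h_{E_d}$ over the non‑torsion points is attained at a generator of $E_d(\mathbb{Q})/E_d(\mathbb{Q})_{\tors}$. I would then restrict $d$ to the squarefree residues $5,6,7\imod{8}$, so that the root number of $E_d$ is $-1$, and use the work of Heath‑Brown and Monsky on the $2$‑Selmer groups of the curves $dy^2=x^3-x$: a positive proportion of such $d$ satisfy $\dim_{\mathbb{F}_2}\Sel_2(E_d)=3$. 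Since $E_d$ has rational full $2$‑torsion, the exact sequence $0\to E_d(\mathbb{Q})/2E_d(\mathbb{Q})\to\Sel_2(E_d)\to\Sha(E_d)[2]\to 0$ and the evenness of $\dim_{\mathbb{F}_2}\Sha(E_d)[2]$ (Cassels' pairing) then force $\rank E_d(\mathbb{Q})=1$ and $\Sha(E_d)[2]=0$; in particular every such $d$ is a congruent number, $|E_d(\mathbb{Q})_{\tors}|=4$, and $\prod_p c_p(E_d)$ is a power of $2$ bounded by $2^{O(\omega(d))}$, hence $\ll_\varepsilon d^\varepsilon$ once the $o(X)$ exceptional $d\le X$ with $\omega(d)$ abnormally large are removed.

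On the resulting family I would bound $\Reg(E_d)$ from below. Since $E_d$ has algebraic rank $1$, and (for a positive proportion of $d$ in the family) analytic rank $1$ as well, by a non‑vanishing statement for $L'(E_d,1)$ of Perelli--Pomyka\l a type \cite[Theorem~$1$]{MR1450922}, the Birch--Swinnerton-Dyer formula holds up to a bounded rational factor by modularity together with the theorems of Gross--Zagier \cite{MR833192} and Kolyvagin — this is the point at which the argument runs parallel to the proof of \cite[Theorem~$2$]{MR3455753}, the relevant Heegner points being precisely those used classically by Heegner \cite{MR0053135} in this very family. Using $\Omega_{E_d}\asymp d^{-1/2}$ one obtains
\[
\Reg(E_d)\;\gg\;\frac{L'(E_d,1)\,\sqrt d}{\prod_p c_p(E_d)\,|\Sha(E_d)|}\;\gg_\varepsilon\;\frac{L'(E_d,1)\,\sqrt d}{d^\varepsilon\,|\Sha(E_d)|},
\]
so the task reduces to bounding $L'(E_d,1)$ from below and $|\Sha(E_d)|$ from above. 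For the former I would expand $L'(E_d,1)$ through its approximate functional equation, where the term $n=1$ contributes $\asymp\log d$, and show that for a positive proportion of $d$ in the family this main term survives, so that $L'(E_d,1)\gg\log d$. For the latter, beyond $\Sha(E_d)[2]=0$ one needs an upper bound of the shape $|\Sha(E_d)|\ll d^{1/2-\delta}$ on a set of positive density; feeding the two bounds into the displayed inequality and optimising yields $\Reg(E_d)>0.845\log d$ on a positive‑density subset of the congruent numbers furnished in the first step.

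The hard part, and the essential obstacle, is the upper bound on $|\Sha(E_d)|$: unconditional bounds on Tate--Shafarevich groups are weak, and in this family $|\Sha(E_d)|$ genuinely attains values that are a small power of $d$, so no bound of the required strength can hold for almost \emph{every} squarefree $d$ — which is exactly why the ``almost every'' statement, Theorem~0, is confined to the exponent $5/8$, its exceptional set being forced to have density $o(X)$. For the present positive‑proportion statement it is enough, instead, that the set of $d$ on which the bound for $|\Sha(E_d)|$ (or the lower bound for $L'(E_d,1)$) fails has density strictly below the density of congruent $d$ with $\dim_{\mathbb{F}_2}\Sel_2(E_d)=3$ supplied by the first step — a margin that is available only because that step now produces a genuine positive density of well‑behaved congruent numbers, rather than the mere $\gg X/\log X$ count coming from Heegner's explicit points. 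Exploiting this margin is what pushes the exponent past $5/8$, and the precise value $0.845$ is the outcome of balancing the admissible bound for $|\Sha(E_d)|$ against the best available subconvexity and second‑moment estimates for $L'(E_d,1)$, which is why it is not a clean fraction.
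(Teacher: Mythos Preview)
Your approach is fundamentally different from the paper's and contains a genuine gap.

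The paper does not touch $L$-functions, BSD, or $\Sha$ at all. Its argument is a counting/pigeonhole one. First one restricts to the set $\mathcal{T}_{\vartheta}(X)$ of squarefree $d\le X$, $d\equiv 5\imod 8$, that have a prime factor exceeding $X^{1-\vartheta}$; this set has density $-\log(1-\vartheta)/\pi^2$ among squarefree integers. Second, one parametrises rational points on $dy^2=x^3-x$ by the complete $2$-descent (as in \cite{MR3455753}), writing $d=d_1d_2d_3d_4$; the large prime factor must divide one $d_{i_0}$, so the \emph{remaining} factor $d_1d_2d_3d_4/d_{i_0}$ is at most $X^{\vartheta}$. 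This extra constraint sharpens the count of \cite[Lemma~6]{MR3455753} to
\[
\#\{d\in\mathcal{T}_\vartheta(X):\eta_d(-1,0)\le d^{1/8+\alpha}\}\ll X^{1/8+\alpha+\vartheta/2+\varepsilon},
\]
so for $\alpha+\vartheta/2<7/8$ almost every $d\in\mathcal{T}_\vartheta(X)$ satisfies $\eta_d(-1,0)>d^{1/8+\alpha}$. Third, Smith's theorem gives that at least $0.629$ of squarefree $d\equiv 5\imod 8$ are congruent numbers. Choosing $\vartheta=0.30996$, $\alpha=0.72$ makes $-\log(1-\vartheta)+0.629>1$, so by inclusion--exclusion the two sets must overlap on a positive proportion. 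The exponent $0.845=1/8+\alpha$ is the output of this numerical balance, not of any $L$-function estimate.

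The gap in your proposal is the step ``one needs an upper bound of the shape $|\Sha(E_d)|\ll d^{1/2-\delta}$ on a set of positive density''. No such unconditional bound is known. Kolyvagin's method proves finiteness of $\Sha$ when the analytic rank is $1$, but the resulting bound on $|\Sha|$ comes \emph{from} the BSD formula and therefore already encodes the regulator you are trying to control --- using it here is circular. Absent an independent bound on $|\Sha(E_d)|$, the displayed inequality for $\Reg(E_d)$ yields nothing, and your heuristic that the exceptional set ``has density strictly below'' the Heath-Brown--Monsky density is an assertion, not an argument. The paper avoids this obstruction entirely by never invoking BSD: it bounds directly the number of $d$ admitting a rational point of small height, which is a Diophantine counting problem.
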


Let us describe the ingredients of the proof of Theorem \ref{Theorem}. The first step consists in restricting our attention to squarefree integers which have a large prime factor. We note that a similar construction was exploited by Fouvry and Jouve \cite{MR3069056} in their investigation of the size of the fundamental solution of the Pell equation. Then, we observe that it is possible to take advantage of this property by parametrizing rational points using a complete $2$-descent process as in the previous work of the author \cite{MR3455753}. To complete the proof, we finally make use of the recent result of Smith \cite[Theorem $1$.$5$]{Smith} which states that a positive proportion of squarefree integers $d \geq 1$ are congruent numbers.

It is worth noting that if we assume that for some $a \in \{ 5, 6, 7 \}$, almost every squarefree integer $d \geq 1$ congruent to $a$ modulo $8$ is a congruent number, then the exponent $0.845$ appearing in Theorem \ref{Theorem} can be replaced by
$1 - \varepsilon$ for any fixed $\varepsilon > 0$.

We finish this introduction by mentioning that it follows from our method and the work of Heegner \cite{MR0053135} that for almost every prime congruent number $p$, we have the lower bound
\begin{equation*}
\eta_p(-1,0) > p^{1 - \varepsilon},
\end{equation*}
for any fixed $\varepsilon > 0$.

\subsection*{Acknowledgements}

It is a great pleasure for the author to thank Fabien Pazuki for interesting conversations related to the topics of this article.

This work was initiated while the author was working as an Instructor at the \'{E}cole Polytechnique F\'{e}d\'{e}rale de Lausanne. The financial support and the wonderful working conditions that the author enjoyed during the four years he worked at this institution are gratefully acknowledged.

The research of the author is integrally funded by the Swiss National Science Foundation through the SNSF Professorship number $170565$ awarded to the project \textit{Height of rational points on algebraic varieties}. Both the financial support of the SNSF and the perfect working conditions provided by the University of Basel are gratefully acknowledged.

\section{Preliminaries}

We let $\mathcal{S}(X)$ denote the set of positive squarefree integers up to $X$ and we let $\mathcal{P}$ be the set of prime numbers. For $\vartheta \in (0,1/2)$, we introduce the set
\begin{equation*}
\mathcal{T}_{\vartheta}(X) = \left\{m p \in ( X^{2 \vartheta}, X ],
\begin{array}{l l}
(m,p) \in \mathcal{S}(X^{\vartheta}) \times \mathcal{P} \\
m p = 5 \imod{8}
\end{array}
\right\}.
\end{equation*}
It is important to note that $\mathcal{T}_{\vartheta}(X) \subset \mathcal{S}(X)$. Indeed, if $m p > X^{2 \vartheta}$ and
$m \leq X^{\vartheta}$ then $p > X^{\vartheta}$ and thus $p \nmid m$.

A crucial ingredient in the proof of Theorem \ref{Theorem} is the lower bound $\# \mathcal{T}_{\vartheta}(X) \gg X$. More precisely, we prove the following lemma which gives an asymptotic formula for the cardinality of the set $\mathcal{T}_{\vartheta}(X)$.

\begin{lemma}
\label{Lemma T}
Let $\vartheta \in (0,1/2)$ be fixed. We have the estimate
\begin{equation*}
\# \mathcal{T}_{\vartheta}(X) = - \frac{\log ( 1 - \vartheta) }{\pi^2} X + O \left( \frac{X}{\log X} \right).
\end{equation*}
\end{lemma}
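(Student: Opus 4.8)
The plan is to count pairs $(m,p)$ with $m \in \mathcal{S}(X^\vartheta)$, $p$ prime, $X^{2\vartheta} < mp \leq X$ and $mp \equiv 5 \imod 8$, exploiting the fact that every such product is automatically squarefree (as noted in the text) and that the representation $n = mp$ with $m \leq X^\vartheta$ and $p$ prime is \emph{unique} when it exists, since then $p > X^\vartheta \geq m$ forces $p$ to be the largest prime factor of $n$. First I would fix $m \in \mathcal{S}(X^\vartheta)$ and count the admissible primes $p$: the conditions are $\max(X^{2\vartheta}/m, X^\vartheta) < p \leq X/m$ together with a congruence condition on $p$ modulo $8$ determined by $m$ (solvable precisely when $(m,2)=1$, which holds since $m$ is squarefree and $mp$ is odd). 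Writing $\overline{m}$ for the inverse of $m$ modulo $8$, the count is $\pi(X/m; 8, 5\overline{m}) - \pi(X^{2\vartheta}/m; 8, 5\overline{m})$ up to an $O(1)$ coming from the lower cutoff $X^\vartheta$, which only affects $m$ in a range $(X^\vartheta/\text{something}, X^\vartheta]$ and contributes $O(X^\vartheta)$ overall.

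Next I would invoke the Siegel–Walfisz theorem (or even just the prime number theorem in arithmetic progressions with a classical error term, which suffices here because the modulus $8$ is fixed) to write, for each odd squarefree $m \leq X^\vartheta$,
\begin{equation*}
\pi(X/m; 8, 5\overline{m}) = \frac{1}{\varphi(8)} \operatorname{Li}(X/m) + O\!\left( \frac{X/m}{(\log (X/m))^2} \right),
\end{equation*}
and similarly for the term with $X^{2\vartheta}/m$. Since $X^{2\vartheta}/m \leq X^\vartheta$, the latter contributes a total of $O(X^\vartheta)$ and can be absorbed. Summing the main term $\tfrac14 \operatorname{Li}(X/m)$ over odd squarefree $m \leq X^\vartheta$ is the heart of the computation: one uses $\operatorname{Li}(X/m) \sim (X/m)/\log(X/m)$, and the key point is that for $m \leq X^\vartheta$ with $\vartheta < 1/2$ one has $\log(X/m) \geq (1-\vartheta)\log X$, so $\operatorname{Li}(X/m) = (X/m)/\log X \cdot (1 + O(1/\log X))$ uniformly. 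Thus
\begin{equation*}
\# \mathcal{T}_{\vartheta}(X) = \frac{X}{4 \log X} \sum_{\substack{m \leq X^\vartheta \\ m \text{ squarefree, odd}}} \frac{1}{m} + O\!\left( \frac{X}{(\log X)^2} \sum_{m \leq X^\vartheta} \frac{1}{m} \right) + O(X^\vartheta).
\end{equation*}

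It remains to evaluate $\sum_{m \leq Y, \ m \text{ sqfree odd}} 1/m$ with $Y = X^\vartheta$. By a standard Euler-product/convolution argument (or partial summation against the known density of squarefree integers), this sum equals $c \log Y + O(1)$ where $c = \prod_{p} (1 - p^{-2}) \cdot (1 - 2^{-1})^{-1}\cdot(1-2^{-2}) = \tfrac{1}{\zeta(2)}\cdot \tfrac{2}{1+2^{-1}}$; more directly, the Dirichlet series $\sum_{m \text{ sqfree odd}} m^{-s} = \prod_{p>2}(1+p^{-s}) = \zeta(s)/\zeta(2s) \cdot (1+2^{-s})^{-1}$ has a simple pole at $s=1$ with residue $\tfrac{1}{\zeta(2)} \cdot \tfrac{1}{1 + 1/2} = \tfrac{4}{3\pi^2}\cdot\tfrac{3}{2}\cdot\tfrac{1}{?}$; I would pin down the constant so that the leading term of $\sum_{m\le Y} 1/m$ is $\tfrac{4}{3\pi^2}\cdot\tfrac{3}{2}\log Y$ — in any case it has the shape $\kappa \log Y + O(1)$ and substituting $Y = X^\vartheta$ gives $\kappa \vartheta \log X + O(1)$. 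Plugging back in, the main term becomes $\tfrac{X}{4\log X}\cdot \kappa \vartheta \log X = \tfrac{\kappa \vartheta}{4} X$, and the two error terms are $O(X/\log X)$ and $O(X^\vartheta) = o(X/\log X)$. Matching against the claimed constant $-\log(1-\vartheta)/\pi^2$ forces the identification of $\kappa$: note $-\log(1-\vartheta)$ is \emph{not} linear in $\vartheta$, so the naive estimate $\operatorname{Li}(X/m) \approx (X/m)/\log X$ is too crude — one must instead keep $\operatorname{Li}(X/m) = (X/m)/\log(X/m)(1+o(1))$ and evaluate $\sum_{m \leq X^\vartheta} \tfrac{1}{m\log(X/m)}$ exactly. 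Writing $m = X^u$, this sum is $\int_0^\vartheta \tfrac{du}{1-u} \cdot (\text{density factor}) = -\log(1-\vartheta)\cdot(\text{density factor})$, which is precisely where the logarithm enters. The main obstacle, then, is carrying out this last summation with sufficient uniformity: I would use partial summation with the known estimate $\sum_{m \leq t,\ m \text{ sqfree odd}} 1 = \tfrac{4}{\pi^2} t + O(\sqrt t)$ against the smooth weight $1/\log(X/t)$ on $t \in [1, X^\vartheta]$, producing $\tfrac{4}{\pi^2}\int_1^{X^\vartheta} \tfrac{dt}{t\log(X/t)} + O(1) = \tfrac{4}{\pi^2}\big(\log\log X - \log\log X^{1-\vartheta}\big) + O(1)$ — wait, that is not right either; rather the integral $\int_1^{X^\vartheta} \tfrac{dt}{t\log(X/t)}$, under $t = X^u$, is $\int_0^\vartheta \tfrac{du}{1-u} = -\log(1-\vartheta)$. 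Hence $\sum_{m\le X^\vartheta} \tfrac{1}{m\log(X/m)} = -\tfrac{4}{\pi^2}\log(1-\vartheta) + O(1/\log X)$, and multiplying by the $X/4$ from the main term (absorbing the $\operatorname{Li}$ versus $t/\log t$ discrepancy into the error) yields exactly $-\tfrac{\log(1-\vartheta)}{\pi^2} X + O(X/\log X)$, as desired.
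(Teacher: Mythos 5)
Your proof is correct and reaches the right constant, but it takes a genuinely different route from the paper, essentially by transposing the order of summation. The paper fixes the prime $p$ in the outer sum (after discarding the ranges $p \leq X^{1-\vartheta}$ and $mp \leq X^{2\vartheta}$ by Chebyshev), evaluates the inner $m$-sum by the elementary asymptotic for squarefree integers in a fixed residue class modulo $8$, and then obtains $\sum_{X^{1-\vartheta} < p \leq X} 1/p = -\log(1-\vartheta) + O(1/\log X)$ directly from Mertens' second theorem. This keeps the toolkit to Chebyshev, Mertens and the squarefree count in arithmetic progressions, with no appeal to the prime number theorem. You instead fix $m$ and count the primes $p$ by the prime number theorem in arithmetic progressions, which pushes all the analytic work onto the $m$-sum; your eventual observation that one must retain $\log(X/m)$ (rather than replacing it by $\log X$) and evaluate $\sum_m 1/(m \log(X/m))$ via partial summation against the density $4/\pi^2$ of odd squarefree integers is exactly the right correction — the substitution $t = X^u$ turning the main term into $\int_0^\vartheta du/(1-u) = -\log(1-\vartheta)$ is the transposed analogue of Mertens in the paper's argument. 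Both routes cost $O(X/\log X)$ and neither is meaningfully sharper here.

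Two small slips worth flagging, neither fatal. First, since $m \leq X^\vartheta$ one always has $X^{2\vartheta}/m \geq X^\vartheta$, so the $\max(X^{2\vartheta}/m, X^\vartheta)$ in your lower cutoff is always $X^{2\vartheta}/m$ and the extra case analysis is vacuous. Second, you write ``Since $X^{2\vartheta}/m \leq X^\vartheta$, the latter contributes a total of $O(X^\vartheta)$''; the inequality is reversed, and the discarded $\pi(X^{2\vartheta}/m;8,\cdot)$ terms actually sum to $O(X^{2\vartheta})$, not $O(X^\vartheta)$. This is still $o(X/\log X)$ because $\vartheta < 1/2$, so the conclusion is unaffected, but as written the justification is incorrect.
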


\begin{proof}
We start by proving that if $m_1, m_2 \in \mathcal{S}(X^{\vartheta})$ and $p_1, p_2 \in \mathcal{P}$ are such that $m_1 p_1 = m_2 p_2$ and $m_1 p_1 > X^{2 \vartheta}$ then $(m_1,p_1) = (m_2,p_2)$. Indeed, if we assume that $p_1 \neq p_2$ then we must have
$p_1 \mid m_2$, but this is impossible since $m_2 \leq X^{\vartheta}$ and, as already noticed, $p_1 > X^{\vartheta}$. So $p_1 = p_2$ and thus also $m_1 = m_2$.

It follows from this observation that
\begin{equation*}
\# \mathcal{T}_{\vartheta}(X) =
\sum_{p \leq X} \sum_{\substack{m \leq X^{\vartheta} \\ X^{2 \vartheta} < m p \leq X \\ mp = 5 \imod{8}}} |\mu(m)|.
\end{equation*}
Moreover, we note that
\begin{equation*}
\sum_{p \leq X^{1 - \vartheta}} \sum_{m \leq X^{\vartheta}} |\mu(m)| \ll \pi(X^{1-\vartheta}) X^{\vartheta},
\end{equation*}
where $\pi(X^{1-\vartheta})$ denotes the number of prime numbers up to $X^{1-\vartheta}$. We also note that
\begin{equation*}
\sum_{p \leq X} \sum_{\substack{m \leq X^{\vartheta} \\ m p \leq X^{2 \vartheta}}} |\mu(m)| \ll
X^{2 \vartheta} \log \log X.
\end{equation*}
In addition, if $p > X^{1 - \vartheta}$ and $m p \leq X$ then we necessarily have $m \leq X^{\vartheta}$. Therefore, Chebyshev's upper bound and the fact that $\vartheta < 1/2$ imply that
\begin{equation*}
\# \mathcal{T}_{\vartheta}(X) =
\sum_{X^{1 - \vartheta} < p \leq X} \sum_{\substack{m \leq X/p \\ mp = 5 \imod{8}}} |\mu(m)| +
O \left( \frac{X}{\log X} \right).
\end{equation*}
For any odd prime $p$, we have the classical asymptotic formula
\begin{equation*}
\sum_{\substack{m \leq X/p \\ mp = 5 \imod{8}}} |\mu(m)| = \frac1{\pi^2} \frac{X}{p} + O \left( \frac{X^{1/2}}{p^{1/2}} \right).
\end{equation*}
We thus deduce
\begin{equation*}
\# \mathcal{T}_{\vartheta}(X) = \frac{X}{\pi^2} \sum_{X^{1 - \vartheta} < p \leq X} \frac1{p} + O \left( \frac{X}{\log X} \right).
\end{equation*}
Finally, it follows from the Second Theorem of Mertens that
\begin{equation*}
\sum_{X^{1 - \vartheta} < p \leq X} \frac1{p} = - \log ( 1 - \vartheta ) + O  \left( \frac1{\log X} \right),
\end{equation*}
which completes the proof.
\end{proof}

For $\alpha > 0$, we define
\begin{equation*}
\mathcal{N}_{\alpha, \vartheta}(X) = \# \{ d \in \mathcal{T}_{\vartheta}(X), \eta_d( -1, 0) \leq d^{1/8 + \alpha} \}.
\end{equation*}
The following lemma gives an upper bound for $\mathcal{N}_{\alpha, \vartheta}(X)$.

\begin{lemma}
\label{Lemma E}
Let $\alpha, \varepsilon > 0$ and $\vartheta \in (0,1/2)$ be fixed. We have the upper bound
\begin{equation*}
\mathcal{N}_{\alpha, \vartheta}(X) \ll X^{1/8 + \alpha + \vartheta/2 + \varepsilon}.
\end{equation*}
\end{lemma}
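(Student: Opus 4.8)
The plan is to parametrise, via a complete $2$-descent, the rational points whose existence is forced by the hypothesis, and to exploit the factorisation $d=mp$ with $p$ a large prime. I would first record what $\eta_d(-1,0)\le d^{1/8+\alpha}$ says about coordinates. Since $E_d$ is the quadratic twist by $d$ of $y^2=x^3-x$, a curve with good reduction away from $2$, the difference between the canonical height $\hat{h}_{E_d}$ and $\tfrac12\log\max(|u|,w^2)$ (for a non-torsion point with $x$-coordinate $u/w^2$ in lowest terms) is bounded by an absolute constant; this uniform comparison is of the same nature as the height estimate used in \cite{MR3455753}. Hence, if $d\in\mathcal{T}_{\vartheta}(X)$ is counted by $\mathcal{N}_{\alpha,\vartheta}(X)$ and $P\in E_d(\mathbb{Q})\smallsetminus E_d(\mathbb{Q})_{\tors}$ realises $\log\eta_d(-1,0)$, with $x(P)=u/w^2$ in lowest terms, then $\max(|u|,w^2)\ll d^{1/4+2\alpha}\le X^{1/4+2\alpha}$, so that $w$ and all the descent variables introduced below are $\ll X^{1/8+\alpha}$. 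Clearing denominators gives $dv^2=u(u-w^2)(u+w^2)$ with $v\ne 0$, so the three integers $F_1=u$, $F_2=u-w^2$, $F_3=u+w^2$ are all nonzero.

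Now for the descent. As $\gcd(u,w)=1$, the $F_i$ are pairwise coprime away from $2$, and since $d\equiv 5\pmod 8$ is odd, every prime dividing $d$ divides exactly one $F_i$, to an odd power; this partitions the prime factors of $d$. Writing $d=mp$ with $m\in\mathcal{S}(X^{\vartheta})$ and $p\in\mathcal{P}$, and using $d>X^{2\vartheta}$ so that $p>X^{\vartheta}\ge m$, we see that $p$ divides a single factor $F_k$. The other two factors $F_i$, $F_j$ are then coprime to $p$, so their squarefree parts $d_i$, $d_j$ (obtained after pulling out the power of $2$) are coprime divisors of $m$; in particular $d_id_j\mid m$, hence $d_id_j\le X^{\vartheta}$. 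Writing $F_i=\varepsilon_i2^{a_i}d_is_i^2$ and $F_j=\varepsilon_j2^{a_j}d_js_j^2$, the linear relations $F_2+F_3=2F_1$ and $F_3-F_2=2w^2$ express $w^2$ as a fixed $\mathbb{Z}$-linear combination of $F_i$ and $F_j$, so $(s_i,s_j,w)$ is a primitive (this uses $\gcd(u,w)=1$) integral point, with all coordinates $\ll X^{1/8+\alpha}$, on a ternary conic $\pm\,\varepsilon_i2^{a_i}d_is_i^2\pm\varepsilon_j2^{a_j}d_js_j^2=c\,w^2$ with $c\in\{1,2\}$, whose determinant is $\gg d_id_j$.

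Then I would count. For each $d$ counted by $\mathcal{N}_{\alpha,\vartheta}(X)$, the chosen point $P$ determines the index $k$, the bounded data $(a_i,a_j,\varepsilon_i,\varepsilon_j)$, the squarefree numbers $d_i$, $d_j$, and the conic point $(s_i,s_j,w)$; conversely this data reconstructs $F_i$ and $F_j$, hence $F_k$ by the linear relation, hence $(u,w)$, hence $d$ as the squarefree part of $F_1F_2F_3$. Thus $\mathcal{N}_{\alpha,\vartheta}(X)$ is at most the number of such tuples. The index, signs, and exponents $a_i,a_j=O(\log X)$ contribute only $O(\log^{O(1)}X)$ choices, while the standard estimate for integral points of bounded height on a conic in terms of its determinant bounds the number of admissible $(s_i,s_j,w)$, for fixed $d_i$, $d_j$, by $\ll X^{1/8+\alpha}(d_id_j)^{-1/2}+1$. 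Summing over $d_id_j\le X^{\vartheta}$ via $\sum_{n\le N}\tau(n)n^{-1/2}\ll N^{1/2+\varepsilon}$ gives a main term $\ll X^{1/8+\alpha+\vartheta/2+\varepsilon}$; the contribution of the $+1$ is $\ll X^{\vartheta+\varepsilon}$, which is admissible as $\vartheta<1/2$ (and in the narrow range of small $\alpha$ where this would not obviously dominate, $\mathcal{N}_{\alpha,\vartheta}(X)=O(1)$ already follows from the trivial lower bound $\eta_d(-1,0)\gg d^{1/6}$ coming from $|F_1F_2F_3|=dv^2$). This yields the stated bound.

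The step I expect to be the main obstacle is the bookkeeping around the large prime $p$: one must arrange the descent so that $p$ is confined to a single factor, and — crucially — recognise that the two remaining factors are small in the strong sense that the product $d_id_j$ of their squarefree parts divides $m$, so that one may sum over $d_id_j\le X^{\vartheta}$ rather than over $d_i$ and $d_j$ separately (the latter loses a factor $X^{\vartheta/2}$ and is too weak). The other delicate ingredient is that the $2$-descent is precisely what replaces the size of $u$ by its square root in the conic variables, so that the exponent $1/8+\alpha$ rather than $1/4+2\alpha$ appears; a direct parametrisation by $(u,w)$ would be far too lossy.
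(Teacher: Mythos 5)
Your route is the same as the paper's in all essential respects: you compare the canonical and Weil heights, run the complete $2$-descent so that each of $u$, $u-w^2$, $u+w^2$ contributes a squarefree part, observe that the large prime $p>X^{\vartheta}$ sits in a single factor so that the product of the remaining squarefree parts divides $m$ and is therefore $\le X^{\vartheta}$, and then count conic points. This reproduces the key mechanism of the paper (which cites Lemmas~$2$, $3$ and $6$ of \cite{MR3455753} for exactly these steps) and, where the main term is concerned, lands on the same dyadic bookkeeping and the same final exponent $1/8+\alpha+\vartheta/2$.

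The one place where your argument has a genuine hole is in absorbing the ``$+1$'' contribution $\ll X^{\vartheta+\varepsilon}$ coming from conics whose determinant exceeds $X^{1/4+2\alpha}$. You acknowledge that this term dominates when $\vartheta>1/4+2\alpha$ and propose to discharge the offending range of small $\alpha$ by the trivial lower bound $\eta_d(-1,0)\gg d^{1/6}$ from $|F_1F_2F_3|=dv^2$. But that bound only forces $\mathcal{N}_{\alpha,\vartheta}(X)=O(1)$ for $\alpha<1/24$, whereas you need to cover the whole interval $\alpha<\vartheta/2-1/8$, which is strictly larger as soon as $\vartheta>1/3$. The correct remedy lies elsewhere and is in fact already sitting in your own set-up: since $p$ divides the squarefree part $d_k$ of the distinguished factor $F_k$, one has $|F_k|\ge d_k\ge p>X^{\vartheta}$, hence $\max(|u|,w^2)\gg X^{\vartheta}$ and $\eta_d(-1,0)\gg X^{\vartheta/2}$ for every $d\in\mathcal{T}_{\vartheta}(X)$ and every non-torsion point. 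Comparing with $\eta_d(-1,0)\le d^{1/8+\alpha}\le X^{1/8+\alpha}$ shows that $\mathcal{N}_{\alpha,\vartheta}(X)$ is eventually empty whenever $\alpha<\vartheta/2-1/8$, which is precisely the range your ``$+1$'' cannot handle. With this corrected trivial bound, your argument closes; the paper sidesteps the issue entirely by quoting the aggregate conic count of \cite[Lemma~$6$]{MR3455753}, which produces $X^{1/8+\alpha}\bigl(D_1D_2D_3D_4/D_{i_0}\bigr)^{1/2}$ directly across the dyadic boxes without any additive remainder.
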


\begin{proof}
We proceed as in the proof of \cite[Lemma $6$]{MR3455753}. More precisely, we first use \cite[Lemma $3$]{MR3455753} to compare the canonical height and the Weil height and we then use \cite[Lemma $2$]{MR3455753} to parametrize the rational points using a complete $2$-descent process. We obtain
\begin{equation*}
\mathcal{N}_{\alpha, \vartheta}(X) \leq 2
\# \left\{ (\nu, \mathbf{d}, \mathbf{b}) \in \{ -1, 1 \} \times \mathbb{Z}_{\geq 1}^4 \times \mathbb{Z}_{\geq 1}^4,
\begin{array}{l}
d_1 d_2 d_3 d_4 \in \mathcal{T}_{\vartheta}(X) \\
\gcd(d_1 b_1, d_2 b_2) = 1 \\
d_2 b_2^2 - \nu d_1 b_1^2 = d_3 b_3^2 \\
\nu d_2 b_2^2 + d_1 b_1^2 = d_4 b_4^2 \\
d_1 b_1^2, d_2 b_2^2 \ll X^{1/4 + 2 \alpha}
\end{array}
\right\},
\end{equation*}
where we have set $\mathbf{d} = (d_1, d_2, d_3, d_4)$ and $\mathbf{b} = (b_1, b_2, b_3, b_4)$. Since we have
\begin{equation*}
\mathcal{T}_{\vartheta}(X) \subset \{ m p \in \mathcal{S}(X), m \leq X^{\vartheta}, p \in \mathcal{P} \},
\end{equation*}
we see that if $d_1 d_2 d_3 d_4 \in \mathcal{T}_{\vartheta}(X)$ then
\begin{equation*}
\frac{d_1 d_2 d_3 d_4}{d_{i_0}} \leq X^{\vartheta},
\end{equation*}
for some $i_0 \in \{ 1, 2, 3, 4 \}$. Therefore, we have
\begin{equation*}
\mathcal{N}_{\alpha, \vartheta}(X) \ll \max_{\substack{i_0 \in \{ 1, 2, 3, 4 \} \\ \nu \in \{ -1, 1 \}}}
\# \left\{ (\mathbf{d}, \mathbf{b}) \in \mathbb{Z}_{\geq 1}^4 \times \mathbb{Z}_{\geq 1}^4,
\begin{array}{l}
|\mu(d_1 d_2 d_3 d_4)| = 1 \\
\gcd(d_1 b_1, d_2 b_2) = 1 \\
d_2 b_2^2 - \nu d_1 b_1^2 = d_3 b_3^2 \\
\nu d_2 b_2^2 + d_1 b_1^2 = d_4 b_4^2 \\
d_1 d_2 d_3 d_4/d_{i_0} \leq X^{\vartheta} \\
d_1 b_1^2, d_2 b_2^2 \ll X^{1/4 + 2 \alpha}
\end{array}
\right\}.
\end{equation*}
Reasoning exactly as in the proof of \cite[Lemma $6$]{MR3455753}, we obtain
\begin{equation*}
\mathcal{N}_{\alpha, \vartheta}(X) \ll X^{\varepsilon} \max_{i_0 \in \{ 1, 2, 3, 4 \}} \sum_{\substack{D_i, B_i \\ i \in \{ 1, 2, 3, 4 \}}} \left( \frac{D_1 D_2 D_3 D_4}{D_{i_0}} \right)^{2/3} \left( \frac{B_1 B_2 B_3 B_4}{B_{i_0}} \right)^{1/3},
\end{equation*}
where the sum is over the $D_i \geq 1/2$, $B_i \geq 1/2$, $i \in \{ 1, 2, 3, 4 \}$, running over the set of powers of $2$ and satisfying
\begin{equation}
\label{Condition 1}
\frac{D_1 D_2 D_3 D_4}{D_{i_0}} \leq X^{\vartheta},
\end{equation}
and
\begin{equation}
\label{Condition 2}
D_i B_i^2 \ll X^{1/4 + 2 \alpha},
\end{equation}
for $i \in \{ 1, 2, 3, 4 \}$. Using the upper bounds \eqref{Condition 2}, we thus get
\begin{equation*}
\mathcal{N}_{\alpha, \vartheta}(X) \ll X^{1/8 + \alpha + \varepsilon} \max_{i_0 \in \{ 1, 2, 3, 4 \}}
\sum_{\substack{D_i, B_i \\ i \in \{ 1, 2, 3, 4 \}}} \left( \frac{D_1 D_2 D_3 D_4}{D_{i_0}} \right)^{1/2}.
\end{equation*}
Using the upper bound \eqref{Condition 1}, we eventually deduce
\begin{equation*}
\mathcal{N}_{\alpha, \vartheta}(X) \ll X^{1/8 + \alpha + \vartheta/2 + 2 \varepsilon},
\end{equation*}
which completes the proof.
\end{proof}

Combining Lemmas \ref{Lemma T} and \ref{Lemma E}, we immediately obtain the following result.

\begin{lemma}
\label{Lemma T2}
Let $\alpha > 0$ and $\vartheta \in (0, 1/2)$ be fixed and such that $\alpha + \vartheta/2 < 7/8$. We have the estimate
\begin{equation*}
\# \{ d \in \mathcal{T}_{\vartheta}(X), \eta_d( -1, 0) > d^{1/8+\alpha} \} =
- \frac{\log ( 1 - \vartheta) }{\pi^2} X + O \left( \frac{X}{\log X} \right).
\end{equation*}
\end{lemma}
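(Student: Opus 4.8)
The plan is to realize the set in the statement as the set-theoretic complement, inside $\mathcal{T}_{\vartheta}(X)$, of the set whose cardinality is $\mathcal{N}_{\alpha, \vartheta}(X)$, and then to combine the two estimates already at our disposal. Since for every squarefree $d \geq 1$ one has $\eta_d(-1,0) \in (0,+\infty]$, the two conditions $\eta_d(-1,0) \leq d^{1/8+\alpha}$ and $\eta_d(-1,0) > d^{1/8+\alpha}$ partition $\mathcal{T}_{\vartheta}(X)$, so that
\begin{equation*}
\# \{ d \in \mathcal{T}_{\vartheta}(X),\ \eta_d(-1,0) > d^{1/8+\alpha} \} = \# \mathcal{T}_{\vartheta}(X) - \mathcal{N}_{\alpha, \vartheta}(X).
\end{equation*}

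I would then insert Lemma \ref{Lemma T} for the first term on the right-hand side, which furnishes the main term $- \frac{\log ( 1 - \vartheta) }{\pi^2} X$ together with an error term $O(X/\log X)$, and bound the second term using Lemma \ref{Lemma E}, applied with a parameter $\varepsilon$ to be fixed at the end. This gives $\mathcal{N}_{\alpha, \vartheta}(X) \ll X^{1/8 + \alpha + \vartheta/2 + \varepsilon}$. The hypothesis $\alpha + \vartheta/2 < 7/8$ is precisely what ensures $1/8 + \alpha + \vartheta/2 < 1$; choosing $\varepsilon$ small enough that $1/8 + \alpha + \vartheta/2 + \varepsilon < 1$ as well, one obtains $\mathcal{N}_{\alpha, \vartheta}(X) \ll X^{1 - \delta}$ for some $\delta = \delta(\alpha, \vartheta) > 0$, which is in particular $O(X/\log X)$. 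Adding the two contributions yields the asserted estimate.

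There is no genuine obstacle here, and indeed the lemma is presented as immediate: the only point deserving a word is that the exponent delivered by Lemma \ref{Lemma E} must be a true power saving below $X$, which is exactly the content of the numerical condition $\alpha + \vartheta/2 < 7/8$ imposed in the statement. It is worth keeping in mind for the sequel that $\mathcal{N}_{\alpha, \vartheta}(X)$ tacitly omits those $d \in \mathcal{T}_{\vartheta}(X)$ with $\rank E_d(\mathbb{Q}) = 0$ (for which $\eta_d(-1,0) = +\infty$, so that such $d$ do lie in the set counted on the left-hand side); this is the reason why the present lemma on its own does not yet single out congruent numbers, and why Smith's theorem is brought in afterwards.
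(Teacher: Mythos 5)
Your proof is correct and is precisely the paper's argument: the paper derives Lemma \ref{Lemma T2} by "combining Lemmas \ref{Lemma T} and \ref{Lemma E}," exactly as you do, with the hypothesis $\alpha + \vartheta/2 < 7/8$ guaranteeing a genuine power saving so that $\mathcal{N}_{\alpha,\vartheta}(X)$ is absorbed into the $O(X/\log X)$ error term. Your closing remark about the rank-zero $d$ (with $\eta_d = +\infty$) and the subsequent role of Smith's theorem is also an accurate reading of how this lemma fits into the proof of the main theorem.
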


We let $\mathcal{C}(X)$ denote the set of squarefree congruent numbers up to $X$. The following result was recently established by Smith
(see \cite[Theorem $1$.$5$]{Smith}) and is crucial in the proof of Theorem~\ref{Theorem}.

\begin{lemma}
\label{Lemma C}
We have the lower bound
\begin{equation*}
\liminf_{X \to \infty} \frac{\# \{ d \in \mathcal{C}(X), d = 5 \imod{8} \}}{\# \{ d \in \mathcal{S}(X), d = 5 \imod{8} \} } \geq 0.629.
\end{equation*}
\end{lemma}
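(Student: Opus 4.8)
The plan is to deduce the stated lower bound directly from \cite[Theorem~$1$.$5$]{Smith}: the bound is, in essence, the consequence of that theorem for the congruent number curve, so what follows is a description of the translation rather than an independent argument. Recall, as noted in the introduction, that a squarefree $d \geq 1$ is a congruent number exactly when $\rank E_d(\mathbb{Q}) \geq 1$, where $E_d$ is the quadratic twist by $d$ of $y^2 = x^3 - x$; thus $\mathcal{C}(X) = \{ d \in \mathcal{S}(X) : \rank E_d(\mathbb{Q}) \geq 1 \}$. For squarefree $d = 5 \imod{8}$ one has $E_d[2](\mathbb{Q}) \cong (\mathbb{Z}/2\mathbb{Z})^2$ and global root number $-1$, so by the $2$-parity theorem $\corank \Sel_{2^{\infty}}(E_d)$ is odd, hence at least $1$. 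It therefore suffices to produce a positive proportion of such $d$ for which $\Sha(E_d)[2^{\infty}]$ is finite and $\corank \Sel_{2^{\infty}}(E_d) = 1$, since then $\rank E_d(\mathbb{Q}) = 1$ and in particular $d \in \mathcal{C}(X)$.

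This is exactly what Smith's theorem on the distribution of $2^{\infty}$-Selmer groups in quadratic twist families delivers. Specialising \cite[Theorem~$1$.$5$]{Smith} to the twist family of $y^2 = x^3 - x$ and to the congruence class $5 \imod{8}$, one obtains that the proportion of $d \in \mathcal{S}(X)$ with $d = 5 \imod{8}$ for which $\Sha(E_d)[2^{\infty}]$ is finite and $\corank \Sel_{2^{\infty}}(E_d) = 1$ — equivalently, for which $\rank E_d(\mathbb{Q}) = 1$ — is, in the limit, at least the explicit constant recorded there, which is bounded below by $0.629$. Every such $d$ lies in $\mathcal{C}(X)$, so letting $X \to \infty$ gives the asserted inequality for the $\liminf$.

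In short, there is no genuine step for us and no real obstacle on our side; the entire difficulty is contained in \cite{Smith}. The only points requiring attention are bookkeeping: checking that the normalisation of the twist family, the choice of congruence condition, and the roles played by the rational $2$-torsion and by the sign of the functional equation in \cite{Smith} agree with the set-up here, and confirming that the explicit lower bound stated there is indeed at least $0.629$ in the normalisation needed. That last verification — reading off the correct constant for the class $5 \imod{8}$ — is the only thing I would be careful about.
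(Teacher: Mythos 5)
Your proposal takes essentially the same route as the paper: the paper gives no proof of Lemma~\ref{Lemma C} at all, stating it as a direct consequence of \cite[Theorem~$1$.$5$]{Smith}, and you likewise reduce the lemma to a citation of that theorem, adding only a (correct) sketch of the mechanism behind it — full rational $2$-torsion, root number $-1$ in the class $5 \imod 8$, the $2$-parity theorem forcing $\corank \Sel_{2^{\infty}}(E_d) \geq 1$, and Smith's equidistribution result pinning this corank to exactly $1$ with density $\geq 0.629$. The one bookkeeping point you rightly flag — that the stated constant from Smith applies to the individual residue class $5 \imod 8$ rather than merely to the union of classes $5, 6, 7 \imod 8$ — is the same implicit verification the paper relies on, so your treatment matches the paper's.
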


\section{Proof of Theorem \ref{Theorem}}

Our goal is to prove that
\begin{equation}
\label{Goal}
\liminf_{X \to \infty}
\frac{\# \{ d \in \mathcal{C}(X), \eta_d( -1, 0) > d^{0.845} \}}{\# \mathcal{S}(X)} > 0.
\end{equation}
Recall that we have the classical asymptotic formula
\begin{equation*}
\# \{ d \in \mathcal{S}(X), d = 5 \imod{8} \} = \frac{X}{\pi^2} + O( X^{1/2} ).
\end{equation*}
As a result, choosing $\vartheta = 0.30996$ and $\alpha = 0.72$ in Lemma \ref{Lemma T2}, we get
\begin{equation}
\label{AF}
\frac{\# \{ d \in \mathcal{T}_{\vartheta}(X), \eta_d( -1, 0) > d^{0.845} \}}{\# \{ d \in \mathcal{S}(X), d = 5 \imod{8} \}} =
- \log ( 1 - \vartheta)  + O \left( \frac1{\log X} \right).
\end{equation}
Since $- \log (1 - \vartheta) + 0.629 > 1$, putting together the estimate \eqref{AF} and Lemma \ref{Lemma C}, we deduce
\begin{equation*}
\liminf_{X \to \infty}
\frac{\# \{ d \in \mathcal{T}_{\vartheta}(X), \eta_d( -1, 0) > d^{0.845} \} + \# \{ d \in \mathcal{C}(X), d = 5 \imod{8} \}}
{\# \{ d \in \mathcal{S}(X), d = 5 \imod{8} \}} > 1.
\end{equation*}
This implies the lower bound \eqref{Goal} and thus completes the proof of Theorem~\ref{Theorem}.

\bibliographystyle{amsalpha}
\bibliography{biblio}

\end{document}